\theoremstyle{plain}
\title[On the linearity of origin-preserving automorphisms]{On the linearity of origin-preserving automorphisms of quasi-circular domains in $\mathbb C^n$}
\author{Atsushi Yamamori}
\address{The Center for Geometry and its Applications, Pohang University of Science and Technology, Pohang 790-784, Republic of Korea}
\keywords{Biholomorphism, Automorphism, Quasi-circular domain}
\theoremstyle{definition}
\thanks{The research of the author was supported in part by SRC-GaiA (Center for Geometry and its
Applications), the Grant 2011-0030044 from The Ministry of Education, The Republic of Korea}
\email{ats.yamamori@gmail.com, yamamori@postech.ac.kr}
\newtheorem{theorem}{Theorem}[section]
\newtheorem{proposition}[theorem]{Proposition}
\newtheorem{corollary}{Corollary}[section]
\newtheorem{remark}{Remark}
\newtheorem{problem}{Problem}
\newtheorem{example}{Example}[section]
\newtheorem{definition}{Definition}[section]
\begin{document}
\begin{abstract}
A theorem due to Cartan asserts that every origin-preserving automorphism of bounded circular domains with respect to the origin is linear.
In the present paper, by employing the theory of Bergman's representative domain, we prove that under certain circumstances
Cartan's assertion remains true for quasi-circular domains in $\mathbb C^n$. 
Our main result is applied to obtain some simple criterions for the case $n=3$ and to prove that Braun-Kaup-Upmeier's theorem remains true
for our class of quasi-circular domains.
\end{abstract}
\maketitle
\section{Introduction}
\subsection{Backgrounds}
Throughout this article, we assume that domains are bounded and contain the origin.
Let $m_1, \ldots, m_n$ be positive integers such that $\mathrm{gcd}(m_1,\ldots, m_n)=1$ and $m_1 \leq \cdots \leq m_n$.
\begin{definition}
A domain $D \subset \mathbb C^n$ is a called quasi-circular domain with weight $(m_1, \ldots, m_n)$ (or a $(m_1, \ldots, m_n)$-circular domain) if it is invariant under
\[
f_{m,\theta}: D \rightarrow \mathbb C^n, \quad (z_1, \ldots, z_n) \mapsto (e^{i m_1 \theta} z_1 , \ldots , e^{i m_n \theta} z_n).
\]
In particular, if $m_1=\cdots=m_n=1$, then it is called circular.
\end{definition}
Let us denote the isotropy subgroup of the holomorphic automorphism group $\mathrm{Aut}(D)$ by $\mathrm{Iso}_p(D)$:
\[
\mathrm{Iso}_p(D):=\{f \in \mathrm{Aut}(D): f(p)=p \}.
\]
There is a remarkable result due to Cartan concerning the structure of $\mathrm{Iso}_p(D)$.
\begin{theorem}\label{thm1}
If a domain $D$ is circular, then every automorphism $f \in \mathrm{Iso}_0(D)$ is linear (i.e. $\mathrm{Iso}_0(D) \subset GL(D)$).
\end{theorem}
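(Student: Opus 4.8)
The plan is to exploit the rotation symmetry of $D$ together with the classical rigidity principle that a self-map of a bounded domain fixing a point with identity differential must already be the identity. Write $R_\theta(z) := e^{i\theta}z$ for the rotation $f_{m,\theta}$ attached to the weight $(1,\ldots,1)$; since $D$ is circular, each $R_\theta$ belongs to $\mathrm{Aut}(D)$. Fix $f \in \mathrm{Iso}_0(D)$ and let $A := df_0 \in GL(n,\mathbb C)$ be its differential at the origin (invertible, as $f$ is a biholomorphism). Expand $f$ into homogeneous Taylor components $f = \sum_{k \geq 1} P_k$, where each $P_k$ is a $\mathbb C^n$-valued homogeneous polynomial of degree $k$ and $P_1(z) = Az$. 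The goal is to show $P_k \equiv 0$ for every $k \geq 2$.

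First I would isolate the auxiliary rigidity lemma (Cartan's uniqueness theorem): if $g$ is a holomorphic self-map of the bounded domain $D$ with $g(0)=0$ and $dg_0 = \mathrm{id}$, then $g = \mathrm{id}$. To prove it, suppose not, and let $Q_m$ be the first nonvanishing homogeneous term of $g - \mathrm{id}$, of some degree $m \geq 2$, so that $g(z) = z + Q_m(z) + O(|z|^{m+1})$. An induction on the iterates shows that the degree-$m$ component of the $k$-fold composition $g^{[k]} = g \circ \cdots \circ g$ equals $k\,Q_m$. Because $D$ is bounded, every $g^{[k]}$ takes values in a fixed ball, so the Cauchy estimates bound the degree-$m$ Taylor coefficients of $g^{[k]}$ uniformly in $k$; this contradicts the linear growth of $k\,Q_m$ unless $Q_m \equiv 0$, whence $g = \mathrm{id}$.

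Next I would apply this lemma to the cleverly chosen automorphism
\[
\phi_\theta := f^{-1} \circ R_{-\theta} \circ f \circ R_\theta,
\]
which lies in $\mathrm{Aut}(D)$ as a composition of automorphisms and fixes $0$. Since every map in the composition fixes the origin, the chain rule gives $d(\phi_\theta)_0 = A^{-1}(e^{-i\theta}I)\,A\,(e^{i\theta}I) = \mathrm{id}$, the scalars commuting freely with $A$ and $A^{-1}$. The rigidity lemma then forces $\phi_\theta = \mathrm{id}$ for every $\theta$, which is equivalent to $f \circ R_\theta = R_\theta \circ f$, i.e. $f(e^{i\theta}z) = e^{i\theta} f(z)$ for all $z \in D$ and all $\theta \in \mathbb R$. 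Substituting the Taylor expansion and using homogeneity yields $\sum_k e^{ik\theta}P_k(z) = \sum_k e^{i\theta}P_k(z)$; matching the coefficients of the linearly independent functions $e^{ik\theta}$ gives $P_k \equiv 0$ for all $k \neq 1$, so $f(z) = Az$ is linear.

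The routine parts are the differential computation and the Fourier comparison, which are purely formal consequences of the circular symmetry. The genuine obstacle is the rigidity lemma, and inside it the induction identifying the leading iterate term as $k\,Q_m$: this is the only place where boundedness of $D$ is indispensable, since the whole argument converts the algebraic growth of the iterates into an analytic contradiction through Cauchy's estimates. Once that lemma is in hand, the linearity of $f$ follows mechanically.
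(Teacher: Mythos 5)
Your proof is correct, but it follows the classical route rather than the one this paper is built on. You prove Cartan's uniqueness theorem from scratch (iterating a self-map $g$ with $g(0)=0$, $dg_0=\mathrm{id}$, identifying the degree-$m$ term of $g^{[k]}$ as $k\,Q_m$, and contradicting the uniform Cauchy estimates supplied by boundedness), and then apply it to $\phi_\theta = f^{-1}\circ R_{-\theta}\circ f\circ R_\theta$ to obtain the commutation relation $f\circ R_\theta = R_\theta\circ f$, after which the Fourier--Taylor comparison kills every homogeneous component except the linear one; all of these steps are sound. The paper deliberately avoids this path: it derives Theorem \ref{thm1} from the theory of Bergman representative domains, namely the fact that every bounded circular domain is a minimal representative domain with center at the origin, combined with Proposition \ref{pro2} (the Bergman mapping $\sigma_0^D$ is linear and invertible on a bounded representative domain and conjugates any origin-preserving biholomorphism to a unitary map), so that linearity follows \emph{without} the uniqueness theorem. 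The trade-off is instructive: your argument is elementary and self-contained, but boundedness (or at least hyperbolicity) is indispensable exactly at the Cauchy-estimate step, so it cannot be extended to the unbounded non-hyperbolic examples the paper cares about; the Bergman-theoretic argument costs the machinery of the Bergman kernel, but it works whenever the Bergman mapping is well-defined (e.g.\ the Fock--Bargmann--Hartogs domains), and this robustness is precisely what lets the paper push Cartan's assertion beyond circular domains to the quasi-circular setting that is its main subject.
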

Although the quasi-circular domains have less symmetry than the circular domains have, it is known that there is an analogue of Cartan's result.
Indeed, it is proved by Kaup \cite{Kaup} that every automorphism $f \in \mathrm{Iso}_0(D)$ of a quasi-circular domain $D$ is a polynomial mapping.
From an analogue of Cartan's theorem, it is tempting to ask the following problem:
\begin{problem}\label{prob1}
Find a class of weights of quasi-circular domains such that Cartan's assertion remains true.
\end{problem}
In \cite{Yamamori-Bull},
it is proved that $\mathrm{Iso}_0(D) \subset GL(D)$ for quasi-circular domains in $\mathbb C^2$ such that $2 \leq m_1 \leq m_2$.
This answered the above problem for $n=2$.
In this article, we continue our previous work.
More precisely, we introduce a certain class of weights $\mathscr L_n$ for quasi-circular domains in $\mathbb C^n$
and prove the linearity of origin-preserving automorphisms.
\begin{remark}
After the main part of this work was finished,
the author came to know about Rong's preprint \cite{Rong}.
His result states that if $f=(f_1,\ldots, f_n)$ is an origin-preserving automorphism of a quasi-circular domain in $\mathbb C^n$, then the degree of $f$ is less than or equal to the resonance order.
Therefore our problem is equivalent to find a class of weights which implies the resonance order $\mu=1$.
\end{remark}
\begin{remark}
In Bell's paper \cite{Bell} and Berteloot-Patrizio's paper \cite{Ber}, Cartan theorem was generalized  for proper mappings between bounded circular domains in $\mathbb C^n$
\end{remark}
\subsection{Comments on our approach}
As is well-known, Theorem \ref{thm1} is usually shown as a consequence of the following theorem, which is also due to Cartan:
\begin{theorem}[Cartan Uniqueness Theorem]
Let $D$ be a bounded domain and $f: D \rightarrow D$ be holomorphic such that $f(p)=p$ for some $p \in D$ and the Jacobian matrix of $f$ at $p$ is the identity matrix (i.e. $\mathrm{Jac}(f, p)=\mathrm{id}$).
Then $f$ is the identity mapping of $D$.
\end{theorem}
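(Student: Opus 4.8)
The plan is to reduce to the case $p=0$ (replacing $D$ by $D-p$ and $f$ by the translate $z \mapsto f(z+p)-p$, which is again a holomorphic self-map of a bounded domain fixing the origin) and then argue by contradiction using the iterates of $f$. Suppose $f$ is not the identity. Since $\mathrm{Jac}(f,0)=\mathrm{id}$, the Taylor expansion of $f$ at the origin has the form
\[
f(z) = z + P_m(z) + P_{m+1}(z) + \cdots,
\]
where each $P_j$ is a $\mathbb C^n$-valued homogeneous polynomial of degree $j$ and $P_m$ is the first nonvanishing term, so that $m \geq 2$.

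The central computation is to track this lowest-order term under iteration. First I would form the iterates $f^{(k)} = f \circ \cdots \circ f$ ($k$ times), which are well-defined self-maps of $D$ precisely because $f(D) \subset D$. A direct substitution shows that composition adds the leading homogeneous parts: assuming inductively that the degree-$m$ part of $f^{(k)}$ equals $kP_m$, one expands $f^{(k+1)} = f \circ f^{(k)}$ and collects terms of degree $m$. The higher-order terms of $f^{(k)}$ contribute only in degrees $>m$, and $P_m\bigl(z + (\text{higher order})\bigr) = P_m(z) + (\text{higher order})$, so the degree-$m$ part of $f^{(k+1)}$ is $(k+1)P_m$. Hence the degree-$m$ part of $f^{(k)}$ is exactly $kP_m$ for every $k$.

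The contradiction then comes from boundedness. Since $D$ is bounded, choose $R$ with $D \subset \{|z| < R\}$ and $r>0$ with $\{|z|<r\} \subset D$. Each $f^{(k)}$ maps the ball of radius $r$ into the ball of radius $R$, so the Cauchy estimates bound every degree-$m$ Taylor coefficient of $f^{(k)}$ by a constant of the form $R/r^m$, uniformly in $k$. But these coefficients are $k$ times the corresponding nonzero coefficients of $P_m$, which grow without bound as $k \to \infty$. This is impossible, so $P_m \equiv 0$, contradicting the minimality of $m$. Therefore $f(z)=z$.

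The individual steps are routine once the iteration identity is in hand, and the boundedness hypothesis enters exactly once, to make the family $\{f^{(k)}\}$ uniformly bounded and thereby feed the Cauchy estimates. I expect the only delicate point to be verifying cleanly that composition multiplies the leading nonlinear term by the iteration count, that is, confirming that interactions among lower-degree data cannot disturb the degree-$m$ coefficient.
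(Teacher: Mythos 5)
Your proof is correct, and it is the standard classical argument for Cartan's uniqueness theorem: iterate $f$, show by induction that the lowest-degree nonlinear homogeneous term $P_m$ accumulates as $kP_m$ in $f^{(k)}$, and then kill it with Cauchy estimates using the uniform bound $f^{(k)}(D)\subset D$. Note, however, that the paper itself does not prove this statement at all; it is quoted as well-known background (with a pointer to Kobayashi's book for the hyperbolic generalization) and used only to motivate why the author's Bergman-representative-domain approach is needed in settings where boundedness or hyperbolicity fails. So there is no proof in the paper to compare against; measured on its own terms, your argument is complete. The one point you flagged as delicate is indeed the crux, and your resolution is right: in the expansion of $P_m\bigl(z+kP_m(z)+\cdots\bigr)$ the correction terms have degree at least $m+(m-1)=2m-1\geq m+1$ because $m\geq 2$, so they cannot contaminate the degree-$m$ coefficient, and the induction $f^{(k)}=\mathrm{id}+kP_m+O(m{+}1)$ goes through.
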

This theorem also holds when $D$ is hyperbolic (cf.~\cite{Kob}). However we face the difficulties to prove this kinds of theorems without boundedness or hyperbolicity.
Thus, to determine whether or not Theorem \ref{thm1} holds for a given non-hyperbolic domain is nontrivial question.

A notable aspect of the importance of our approach is that one can bypass this difficulty.
Indeed, it is observed in our previous paper \cite{KNY} that
\emph{Theorem \ref{thm1} remains true for any circular domains whenever ``the Bergman mapping" is well-defined.}
Although it is a simple observation from Ishi-Kai's paper \cite{IK}, it appears not to have been noticed by many mathematicians.
Moreover this observation gives us an unbounded non-hyperbolic example $D_{n,m}$ for which Theorem \ref{thm1} remains true:
\[
D_{n,m}=\{(z,\zeta) \in\mathbb C^n \times \mathbb C^m: \|\zeta\|^2 < e^{-\mu \|z\|^2}\}, \quad \mu>0.
\]
Here $D_{n,m}$ is called the Fock-Bargmann-Hartogs domain in \cite{Y2013}.
The above observation is applied to the following studies:
\begin{itemize}
\item an explicit description of the automorphism group of $D_{n,m}$ \cite{KNY},
\item rigidity properties of proper holomorphic mappings for $D_{n,m}$ \cite{TW}.
\end{itemize}
It would be desirable to give a non-hyperbolic quasi-circular example for which Cartan theorem holds and the automorphism group is explicitly computed.
Unfortunately, we do not have any such examples at the time of writing this paper.
To find a such example is interesting problem and it will be investigated in the future research.
\subsection{Outline of this paper}
Before moving to the next section, let us pause to explain an outline of our argument.
\begin{itemize}
\item[(i)] By using the theory of representative domain (Proposition \ref{pro2}), we reduce Problem \ref{prob1} to the study of ``the Bergman metric tensor" $T_D$.
More precisely, it is reduced to find a class of weights such that $T_D(z,0) \equiv T_D(0,0)$ (Problem \ref{prob}).
\item[(ii)] In Section \ref{mainsec}, we introduce a class of weights $\mathscr L_n$ for quasi-circular domains in $\mathbb C^n$.
\item[(iii)] 
For any quasi-circular domains, it is proved that the upper triangular entries of $T_D(z,0)$ are constants. To show that the remaining entries are constants, 
we use some properties of the class $\mathscr L_n$.
\item[(iv)] By (i) and (iii), it is proved that Cartan's theorem remains true for our class of quasi-circular domains. It is known by Kaup that
if two quasi-circular domains are biholomorphic then there is a biholomorphism fixing the origin.
Using this fact, we also prove that Braun-Kaup-Upmeier's theorem also remains true for our cases.
\end{itemize}
As an application of our theorem, three-dimensional cases are further studied.
\section{Definitions and basic facts}
Let us begin our study with basic definitions.
Let $D\subset \mathbb C^n$ be a domain and $A^2 (D)$ be the space of square integrable holomorphic functions on $D$.
The space $A^2(D)$ is called the Bergman space of $D$ and its reproducing kernel $K_D$ is called the Bergman kernel.
Now we introduce an important class of domains, which is so-called minimal domains (see also \cite[Theorem 3.1]{Mas}).
\begin{definition}
A domain $D$ is called minimal if there is a point $z_0$ such that $K(z,z_0) \equiv K(z_0,z_0)$ for any $z \in D$. 
The point $z_0$ is called the center of a minimal domain $D$.
\end{definition}

Define an $n\times n$ matrix $T_D(z,w)$ by
$$T_D (z,w):=
\begin{pmatrix} 
\dfrac{\partial^2 }{\partial \overline{w_1}\partial z_1}\log K_D(z,w) & \cdots & \dfrac{\partial^2 }{\partial \overline{w_1}\partial z_n}\log K_D(z,w)
\\ \vdots & \ddots & \vdots\\
\dfrac{\partial^2 }{\partial \overline{w_n}\partial z_1}\log K_D(z,w) & \cdots & \dfrac{\partial^2 }{\partial\overline{ w_n}\partial z_n}\log K_D(z,w)
\end{pmatrix},$$
for $z,w \in D$ such that $K_D(z,w)\not =0$. The matrix $T_D(z,z)$ is positive definite for any $z \in D$ when $D$ is bounded.
In the following, we denote for simplicity $$K_{\overline{i}{j}} (z,w)=\frac{\partial^2 \log K_D (z,w)}{\partial \overline{w_i}   \partial z_j} .$$
We next introduce another important class of domains by using the matrix $T_D$ (see also \cite[Theorem 2.2]{Tsu}).
\begin{definition}
A bounded domain $D$ is called a representative domain if there is a point $z_0 \in D$ such that
$T_D(z,z_0) \equiv T_D(z_0,z_0)$.
The point $z_0$ is called the center of a representative domain $D$.
\end{definition}
These two important class of domains have been studied by various authors (see \cite{IY}, \cite{Mas}, \cite{OK}, \cite{Tsu} and references therein).
Let us prepare some facts on these class of domains.
 The first fact is the minimality of the quasi-circular domains.
In \cite{Yamamori-Bull}, we proved this proposition for $n=2$.
Although the same proof works for any $n\in \mathbb Z+$, we give a proof for the sake of completeness.
\begin{proposition}\label{pro1}
Every bounded quasi-circular domain in $ \mathbb C^n $ is minimal with the center at the origin.
\end{proposition}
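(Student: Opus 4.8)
The plan is to exploit the transformation behavior of the Bergman kernel under the one-parameter family of automorphisms $f_{m,\theta}$ and then read off the conclusion from a power-series expansion at the origin. First I would recall the standard functorial identity for the Bergman kernel under a biholomorphism $\phi$ of $D$, namely $K_D(z,w) = \det\phi'(z)\, K_D(\phi(z),\phi(w))\, \overline{\det\phi'(w)}$. Applying this with $\phi = f_{m,\theta}$, which is the \emph{linear} map whose Jacobian is the diagonal matrix with entries $e^{im_1\theta},\ldots,e^{im_n\theta}$, I observe that its determinant $e^{i(m_1+\cdots+m_n)\theta}$ is a \emph{constant} (independent of the point) of modulus one. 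Hence the two Jacobian factors multiply to $1$ and cancel, yielding the invariance
\[
K_D(f_{m,\theta}(z),f_{m,\theta}(w)) = K_D(z,w), \qquad z,w \in D,\ \theta \in \mathbb R.
\]

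Next I would specialize to $w=0$. Since $f_{m,\theta}(0)=0$, this gives $K_D(f_{m,\theta}(z),0) = K_D(z,0)$ for every $z \in D$ and every $\theta$. Writing the Taylor expansion $K_D(z,0)=\sum_\alpha c_\alpha z^\alpha$ of the holomorphic function $z \mapsto K_D(z,0)$ about the origin, the invariance forces $\sum_\alpha c_\alpha\, e^{i(m\cdot\alpha)\theta} z^\alpha = \sum_\alpha c_\alpha z^\alpha$ for all $\theta$, where $m\cdot\alpha := m_1\alpha_1+\cdots+m_n\alpha_n$. Comparing Taylor coefficients (each fixed $\theta$ gives two holomorphic functions with identical coefficients) shows $c_\alpha\,e^{i(m\cdot\alpha)\theta}=c_\alpha$ for all $\theta$, hence $c_\alpha=0$ whenever $m\cdot\alpha\neq 0$.

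The decisive point, and the place where the hypotheses genuinely enter, is that all weights $m_j$ are \emph{positive} integers: for a multi-index $\alpha$ with nonnegative entries one has $m\cdot\alpha=0$ if and only if $\alpha=0$. Consequently every nonconstant coefficient vanishes, so $K_D(z,0)$ coincides with the constant $c_0=K_D(0,0)$ on a neighborhood of the origin; since $D$ is a connected bounded domain and $z\mapsto K_D(z,0)$ is holomorphic on all of $D$, the identity theorem propagates this equality to $K_D(z,0)\equiv K_D(0,0)$ throughout $D$. This is precisely the minimality condition with center at the origin. I expect no serious obstacle: the only items requiring care are the exact form of the transformation law and the legitimacy of the term-by-term comparison, both of which are routine, so that the real content reduces to the elementary but essential observation that positivity of the weights admits no nonzero resonance $m\cdot\alpha=0$.
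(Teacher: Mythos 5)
Your proof is correct and takes essentially the same route as the paper's: apply the Bergman kernel transformation formula to the automorphisms $f_{m,\theta}$, expand $z \mapsto K_D(z,0)$ in a power series at the origin, and use positivity of the weights to conclude that no nonzero multi-index $\alpha$ satisfies $m\cdot\alpha=0$, so all nonconstant coefficients vanish. If anything, you are slightly more explicit than the paper on two routine points it leaves implicit, namely the cancellation of the constant Jacobian determinant factors and the identity-theorem step extending the equality $K_D(z,0)=K_D(0,0)$ from a neighborhood of the origin to all of $D$.
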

\begin{proof}
Let us first recall the transformation formula of the Bergman kernel under any biholomorphism $f$ between two domains $D, D'$:
\[
K_D(z,w)= \det \mathrm{Jac}(f,z) K_D(f(z), f(w)) \overline{\det \mathrm{Jac}(f,w)}.
\]
Applying this formula to $D=D'$ and $f=f_{m,\theta}$, we obtain
\[
K_D(z,0)=K_D(f_{m,\theta},0).
\]
Put $K_D(z,0)=\sum_{k \geq 0}a_k z_1^{k_1}\cdots z_n^{k_n}$. Then it is equivalent to
\[
a_k = e^{i (\sum_{j=1}^n m_j k_j ) \theta} a_k.
\]
Since $\sum_{j=1}^n m_j k_j  \neq 0$ unless $(k_1,\ldots, k_n)=(0,\ldots, 0)$, we conclude that $a_k=0$ except the constant term and thus $K_D(z,0)\equiv K_D(0,0)$ as desired.
\end{proof}
The next assertion states the linearity of origin-preserving biholomorphisms between minimal representative domains.
\begin{proposition}\label{pro2}
If $D, D' \subset \mathbb C^n$ are bounded minimal representative domains with the center at the origin, Then any biholomorphism which maps the center of $D$ to that of $D'$ is linear.
\end{proposition}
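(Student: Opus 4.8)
The plan is to derive the transformation rule obeyed by the matrix $T_D$ under a biholomorphism and then substitute into it the defining property of a representative domain. First I would start from the transformation formula for the Bergman kernel recalled in the proof of Proposition \ref{pro1}, now for a biholomorphism $f\colon D\to D'$, namely $K_D(z,w)=\det\mathrm{Jac}(f,z)\,K_{D'}(f(z),f(w))\,\overline{\det\mathrm{Jac}(f,w)}$, and take logarithms to obtain
\[
\log K_D(z,w)=\log\det\mathrm{Jac}(f,z)+\log K_{D'}(f(z),f(w))+\overline{\log\det\mathrm{Jac}(f,w)}.
\]
Here the hypothesis that $D$ and $D'$ are minimal with center the origin is exactly what guarantees $K_D(z,0)\neq 0$ and $K_{D'}(w,0)\neq 0$ throughout (since the diagonal values are positive), so that these logarithms and the associated entries $K_{\overline{i}j}$ are well defined on the slices we need.

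Next I would apply the mixed second derivative $\partial^2/\partial\overline{w_i}\,\partial z_j$ to this identity. The crucial observation is that the first summand is holomorphic in $z$ and independent of $w$, while the third is antiholomorphic in $w$ and independent of $z$; hence both are annihilated by $\partial^2/\partial\overline{w_i}\,\partial z_j$, and only the middle term survives. Applying the chain rule through $f$ to that term and assembling the result over all $i,j$ yields the transformation law
\[
T_D(z,w)=\mathrm{Jac}(f,w)^{*}\,T_{D'}(f(z),f(w))\,\mathrm{Jac}(f,z),
\]
where $\mathrm{Jac}(f,w)^{*}$ denotes the conjugate transpose. This is the polarized form of the classical rule expressing that $T_D$ transforms as a Hermitian metric; the main computational step of the whole argument is carrying out this derivation carefully, in particular tracking the indices in the chain rule and confirming that the two determinant terms genuinely drop out.

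Finally I would specialize to $w=0$. Since $f$ carries the center $0$ of $D$ to the center $0$ of $D'$, we have $f(0)=0$, and the representative property of $D'$ gives $T_{D'}(f(z),0)=T_{D'}(0,0)$ while that of $D$ gives $T_D(z,0)=T_D(0,0)$. Substituting these into the transformation law collapses it to
\[
T_D(0,0)=\mathrm{Jac}(f,0)^{*}\,T_{D'}(0,0)\,\mathrm{Jac}(f,z).
\]
Because $D$ and $D'$ are bounded, $T_{D'}(0,0)$ is positive definite and hence invertible, and $\mathrm{Jac}(f,0)$ is invertible as $f$ is biholomorphic; solving for $\mathrm{Jac}(f,z)$ shows that it equals the constant matrix $\bigl(\mathrm{Jac}(f,0)^{*}\,T_{D'}(0,0)\bigr)^{-1}T_D(0,0)$, independent of $z$. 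A holomorphic map with constant Jacobian is affine, and since $f(0)=0$ the constant part vanishes, so $f(z)=\mathrm{Jac}(f,0)\,z$ is linear. I expect the only real obstacle to be the careful derivation of the transformation law for $T_D$; once that identity is in hand, the conclusion is immediate from the positive definiteness of $T_{D'}(0,0)$ and the invertibility of $\mathrm{Jac}(f,0)$.
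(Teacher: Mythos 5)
Your proof is correct, and it takes a more direct route than the paper. The transformation law you derive is exactly right for the paper's index convention (rows of $T_D$ indexed by $\overline{w_i}$, columns by $z_j$): the chain rule gives $T_D(z,w)=\mathrm{Jac}(f,w)^{*}\,T_{D'}(f(z),f(w))\,\mathrm{Jac}(f,z)$, the two $\log\det$ terms are indeed annihilated by the mixed derivative, and minimality is used precisely where you say it is --- it makes $K_D(z,0)\equiv K_D(0,0)>0$ and $K_{D'}(\zeta,0)\equiv K_{D'}(0,0)>0$, so the $w=0$ slice of $T$ is globally well defined. The paper, by contrast, does not argue with $T_D$ directly: it introduces the Bergman mapping $\sigma_{0}^D (z)=T_D(0,0)^{-1/2}\,\mathrm{grad}_{\overline{w}}\log \bigl(K_D(z,w)/K_D(0,w)\bigr) |_{w=0}$ and quotes two facts (from Ishi--Kai and the author's earlier paper): that $\sigma_0^D$ is linear and invertible when $D$ is a bounded representative domain, and that $\sigma_{0}^{D'} \circ f = U \circ \sigma_{0}^{D}$ for some unitary $U$; linearity of $f=(\sigma_0^{D'})^{-1}\circ U\circ \sigma_0^D$ is then immediate. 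The two arguments rest on the same kernel transformation formula, and the ``constant derivative plus $f(0)=0$ implies linear'' step you use is the same mechanism hidden inside the cited linearity of $\sigma_0^D$; but your version is self-contained (no Bergman mapping, no $T_D(0,0)^{-1/2}$, no unitary $U$ needed), whereas the paper's version buys the extra structural information that $f$ is conjugated to a unitary map by the Bergman mappings --- the form that is exploited elsewhere (e.g.\ in computing automorphism groups explicitly). One cosmetic remark: the paper's displayed rule $T_{D_1}(z,w)=\mathrm{Jac}(g,z)\, T_{D_2}(g(z), g(w))\, \overline{{}^t \mathrm{Jac}(g,w)}$ places the Jacobians in the transposed arrangement relative to yours; with the paper's own definition of $T_D$ your arrangement is the consistent one, and the discrepancy is a harmless transpose convention that does not affect either argument.
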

This proposition follows from the following properties of the Bergman mapping
$\sigma_{0}^D (z)=T_D(0,0)^{-1/2}\mbox{grad}_{\overline{w}}\log \frac{K_D(z,w)}{K_D(0,w) } |_{w=0}.$
\begin{itemize}
\item the Bergman mapping $\sigma_{0}^D$ is linear and invertible if $D$ is a bounded representative domain.
\item If $f: D \rightarrow D'$ is an origin-preserving biholomorphism of two minimal representative domains with the center at the origin, 
then we have $\sigma_{0}^{D'} \circ f = U \circ \sigma_{0}^{D}$ for some unitary transformation $U: \mathbb C^n \rightarrow \mathbb C^n$.
\end{itemize}
For details of these facts, we refer the reader to \cite{IK} and \cite{Yamamori-Bull}.\par
For instance, every bounded circular domain is a minimal representative domain with the center at the origin.
This fact, together with Proposition \ref{pro2}, gives us another proof of Theorem \ref{thm1} without Cartan Uniqueness Theorem (cf. \cite{IK}).\par
On the other hand, quasi-circular domains are not representative domains in general. 
Thanks to these propositions, our problem is reduced to the following problem:
\begin{problem}\label{prob}
Find a class of weights of quasi-circular domains $D$ such that $T_D(z,0) \equiv T_D(0,0)$ for any $z \in D$.
\end{problem}
We conclude this section with a remark on $T_D$.
\begin{remark}
As we proved in Proposition \ref{pro1}, every bounded quasi-circular domain $D$ is minimal with the center at the origin.
In other words, the Bergman kernel $K_D$ has a property $K_D(z,0)\equiv K_D(0,0)$. Moreover it is well-known that $K_D(0,0) >0$.
Thus the Bergman kernel $K_D(z,0)$ is zero-free for any $z \in D$ and $T_D(z,0)$ is well-defined for any $z \in D$.
\end{remark}
\section{Main results}
\subsection{Quasi-circular domains in $\mathbb C^n$}\label{mainsec}
In the following, for simplicity of our argument, we always assume that the weight $(m_1,\ldots, m_n)$ of a quasi-circular domain satisfies
$m_1 < \cdots < m_n$.
Let us define the set $I_{m,M}^\ell$ by
\begin{align*}
I_{m,M}^\ell &:=\left\lbrace r_{k,m,i}^\ell: 
\begin{array}{c}
1\leq i\leq\ell, (k_1,\ldots, k_\ell) \neq0 \mbox{ such that } \\
M\sum_{q=1}^{\ell} m_q>  r_{k,m,i}^\ell > (M-1)\sum_{q=1}^{\ell} m_q 
\end{array}
\right\rbrace,\\
r_{k,m,i}^\ell&:=m_i+ \sum_{q=1}^{\ell} m_q k_q, 
\end{align*}
where $M \in \mathbb Z+$ and $k_1,\ldots ,k_{\ell} \in \mathbb Z_{\geq 0}$.
The matrices
$(r_{k,m,1}^2)_{0 \leq k_1, k_2 \leq 2}$ and $(r_{k,m,2}^2)_{0 \leq k_1, k_2 \leq 2}$ are given as follows:
\begin{align*}
(r_{k,m,1}^2)_{0 \leq k_1, k_2 \leq 2}&=
\left(
\begin{array}{ccc}
 m_1 & m_1+m_2 & m_1+2 m_2 \\
 2 m_1 & 2 m_1+m_2 & 2 m_1+2 m_2
   \\
 3 m_1 & 3 m_1+m_2 & 3 m_1+2 m_2
\end{array}
\right),\\
(r_{k,m,2}^2)_{0 \leq k_1, k_2 \leq 2}&=
\left(
\begin{array}{ccc}
 m_2 & 2 m_2 & 3 m_2 \\
 m_1+m_2 & m_1+2 m_2 & m_1+3
   m_2 \\
 2 m_1+m_2 & 2 m_1+2 m_2 & 2 m_1+3
   m_2
\end{array}
\right).
\end{align*}
\begin{definition}
Let $(m_1,\ldots, m_n)$ be the weight of a quasi-circular domain $D$ in $\mathbb C^n$ with $n \geq 2$.
Define the set $\mathscr L_2$ by $\mathscr L_2:=\{(m_1,m_2): 2 \leq m_1, m_2 \not \equiv 0(\mbox{mod $m_1$})\}$. 
For $n \geq 3$, $\mathscr L_n$ is defined as the set of positive integers $(m_1, \ldots, m_n)$ such that
\begin{itemize}
\item[(i)] $(m_1, \ldots, m_{n-1})$ is an element of $\mathscr L_{n-1}$,
\item[(ii)] there exists an integer $M_n \geq 1$ such that  $m_n \not\in I_{m,M_n}^{n-1}$ and
\[
 M_n\sum_{q=1}^{n-1} m_q > m_n > (M_n-1)\sum_{q=1}^{n-1} m_q.
\]
\end{itemize}
Here $M_n$ depends only on $m_n$.
\end{definition}
If $n=2$, then the weight $(m_1, m_2)$ of a quasi-circular domain $D \subset \mathbb C^2$ satisfies $\mathrm{gcd}(m_1,m_2)=1$ by definition.
Thus the condition ``$m_2 \not \equiv 0(\mbox{mod $m_1$})$" is superfluous in this case.
If $n=3$, $(m_1,m_2,m_3)$ is in $\mathscr L_3$ if and only if:
\begin{enumerate}
\item[(a)] $m_1 \geq 2$ and $m_2 \not \equiv 0 (\mbox{mod $m_1$})$,
\item[(b)] there exists an integer $N \geq 1$ such that $m_3 \not\in I_{m,N}^2$ and $$N(m_1+m_2) > m_3 > (N-1)(m_1+m_2).$$
\end{enumerate}
We note that if $N=1$, then the set $I_{m,1}^2$ is given by $$I_{m,1}^2=\{n m_1: n \geq 2, nm_1 < m_1+m_2\}.$$
Thus the condition (b) is equivalent to the following:
\begin{itemize}
\item[(b')] $m_1+m_2>m_3 $ and $m_3 \not \equiv 0 (\mbox{mod $m_1$})$.
\end{itemize}
Let us give some examples.
\begin{example}
The weight of the form $(2, m_2, m_3)$ cannot satisfy (a),(b').
Indeed, if $m_2$ is a odd number, then $m_3$ must be an even number by the condition $m_2+2 > m_3 > m_2$.
\end{example}
\begin{example}\label{ex}
Let us consider the weights $(m_1,m_2,m_3)=(3,5,7), (4,5,7)$.
For these weights, we can easily check the conditions (a),(b').
The followings are quasi-circular domains with these weights.
\begin{align*}
D_1&=\{(z_1,z_2,z_3)\in\mathbb B^3: |z_1^{35}+ z_2^{21} + z_3^{15} |<1\},\\
D_2&=\{(z_1,z_2,z_3)\in\mathbb B^3: |z_1^{35}+ z_2^{28} + z_3^{20} |<1\}.
\end{align*}
\end{example}
\begin{example}
If $(m_1,m_2,m_3)=(3,7,11)$, the set $I_{m,2}^2$ is given by $$I_{m,2}^2=\{12,13,14,15,16,17,18,19\}.$$
Thus this weight satisfies the conditions (a), (b) with $N=2$.
In this case, $r_{k,m,1}^2$ and $r_{k,m,2}^2$ are given as follows.
\begin{align*}
(r_{k,m,1}^2)_{\substack{0 \leq k_1 \leq 6\\ 0 \leq k_2 \leq 3}}=
\left(
\begin{array}{cccc}
 3 & 10 & \cellcolor[gray]{0.8}17  &24\\
 6 & \cellcolor[gray]{0.8}13 & 20 & 27\\
 9 & \cellcolor[gray]{0.8}16 & 23  &30\\
\cellcolor[gray]{0.8} 12 & \cellcolor[gray]{0.8}19 & 26 &33\\
\cellcolor[gray]{0.8} 15 & 22 & 29  & 36\\
\cellcolor[gray]{0.8} 18 & 25 & 32 &  39\\
21 & 28 & 35 & 42
\end{array}
\right), \quad
(r_{k,m,2}^2)_{\substack{0 \leq k_1 \leq 5\\ 0 \leq k_2 \leq 2}}=
\left(
\begin{array}{ccc}
 7 & \cellcolor[gray]{0.8}14 & 21 \\
 10 & \cellcolor[gray]{0.8}17 & 24 \\
 \cellcolor[gray]{0.8}13 & 20 & 27 \\
\cellcolor[gray]{0.8} 16 & 23 & 30 \\
\cellcolor[gray]{0.8} 19 & 26 & 33\\
22 & 29 & 36
\end{array}
\right).
\end{align*}
\end{example}
\begin{example}\label{ex35}
In the case of the weight of the form $(3, 5, m_3)$, there is the integer $m_3=7$ such that $(3, 5, m_3) \in \mathscr L_3$ with $N=1$.
On the other hand, there does not exist an integer $m_3$ such that $(3, 5, m_3) \in \mathscr L_3$ with $N=2$.
Indeed, $I_{m,2}^2$ is given by $$I_{m,2}^2=\{9,10,11,12,13,14,15\}.$$
In this case, $r_{k,m,1}^2$ and $r_{k,m,2}^2$ are given as follows.
\begin{align*}
(r_{k,m,1}^2)_{\substack{0 \leq k_1 \leq 5\\ 0 \leq k_2 \leq 3}}=
\left(
\begin{array}{cccc}
 3 & 8 & \cellcolor[gray]{0.8}13 & 18 \\
 6 & \cellcolor[gray]{0.8}11 & 16  & 21\\
\cellcolor[gray]{0.8} 9 & \cellcolor[gray]{0.8}14 & 19  & 24\\
 \cellcolor[gray]{0.8}12 & 17 & 22 & 27\\
 \cellcolor[gray]{0.8}15 & 20 & 25 & 30\\
18 & 23 &28 & 33
\end{array}
\right), \quad
(r_{k,m,2}^2)_{\substack{0 \leq k_1 \leq 4\\ 0 \leq k_2 \leq 3}}=
\left(
\begin{array}{cccc}
 5 & \cellcolor[gray]{0.8}10 & \cellcolor[gray]{0.8}15 & 20\\
 8 & \cellcolor[gray]{0.8}13 & 18 & 23\\
 \cellcolor[gray]{0.8}11 & 16 & 21 & 26\\
\cellcolor[gray]{0.8} 14 & 19 & 24 & 29\\
17 & 22 & 27 & 32
\end{array}
\right).
\end{align*}
Since $I_{m,N}^2$ is a finite set and the mapping $\tau$
\[
\tau : I_{m,N}^2 \rightarrow I_{m,N+1}^2, \quad r_{k,m,i}^\ell \mapsto r_{k,m,i}^\ell+ m_1+m_2,
\]
is injective, we see that $\# I_{m,N}^2 \leq \# I_{m,N+1}^2$. This implies that 
there does not exist an integer $m_3$ such that $(3, 5, m_3) \in \mathscr L_3$ for any $N\geq2$.
\end{example}
From these examples, it is appropriate to ask the following problem.
\begin{problem}
How does the weight of a quasi-circular domain affect the cardinal number of $I_{m,2}^2$ ?
\end{problem}
This problem will be studied in Section \ref{C3}. 
Now let us prove the following theorem which gives an answer to Problem \ref{prob1}.
\begin{theorem}
Let $D, D' $ be bounded quasi-circular domains in $\mathbb C^n$ such that whose weights are in $\mathscr L_n$, 
then every biholomorphism $f: D \rightarrow D'$ with $f(0)=0$ is linear.
\end{theorem}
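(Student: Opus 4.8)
\medskip

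The plan is to reduce the statement to an intrinsic property of a single domain and then to extract that property from the arithmetic of the weight. By Proposition~\ref{pro1} both $D$ and $D'$ are minimal with center the origin, so by Proposition~\ref{pro2} it suffices to prove that a bounded quasi-circular domain whose weight lies in $\mathscr L_n$ is a representative domain with center the origin; equivalently, to settle Problem~\ref{prob} for $\mathscr L_n$. Indeed $f(0)=0$ sends the center of $D$ to the center of $D'$, and both domains carry the same weight, so once $T_D(z,0)\equiv T_D(0,0)$ is established (and likewise for $D'$), Proposition~\ref{pro2} yields the linearity of $f$ at once.

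First I would compute the entries of $T_D(z,0)$ explicitly. Writing the monomial expansion $K_D(z,w)=\sum_{\alpha,\beta}c_{\alpha\beta}\,z^\alpha\overline{w^\beta}$ and repeating the invariance argument of Proposition~\ref{pro1}, one sees that $c_{\alpha\beta}=0$ unless $\langle\alpha\rangle=\langle\beta\rangle$, where $\langle\alpha\rangle:=\sum_{q=1}^{n}m_q\alpha_q$ denotes the weighted degree. Setting $w=0$ has two consequences: the holomorphic derivative $\partial_{z_j}K_D(z,0)$ vanishes identically, so the second term in $K_{\overline i j}=\partial_{\overline{w_i}}(\partial_{z_j}K_D/K_D)$ drops out and
\[
K_{\overline i j}(z,0)=\frac{1}{K_D(0,0)}\,\partial_{z_j}g_i(z),\qquad g_i(z):=\partial_{\overline{w_i}}K_D(z,w)\big|_{w=0}.
\]
Moreover $g_i(z)=\sum_{\alpha}c_{\alpha,e_i}z^\alpha$ (with $e_i$ the $i$-th unit multi-index) is supported on $\langle\alpha\rangle=m_i$, hence is weighted-homogeneous of degree $m_i$, so $\partial_{z_j}g_i$ is weighted-homogeneous of degree $m_i-m_j$. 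For $i=j$ this degree is $0$, so the diagonal entries are constants; for $i<j$ it is negative, so the strictly-upper entries vanish. Thus the upper-triangular part of $T_D(z,0)$ is already constant for \emph{every} quasi-circular domain.

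It remains to treat the entries with $i>j$, and this is where the class $\mathscr L_n$ enters. The key arithmetic property I would isolate is
\[
(\mathrm P)\qquad \text{for each } i,\ \text{the only } \alpha\in\mathbb Z_{\geq 0}^{n}\text{ with } \langle\alpha\rangle=m_i \text{ is } \alpha=e_i.
\]
Granting $(\mathrm P)$, the space of weighted-degree-$m_i$ monomials is spanned by $z_i$ alone, so $g_i$ is a constant multiple of $z_i$ and $\partial_{z_j}g_i\equiv 0$ for every $j\neq i$; in particular the lower entries vanish. Combined with the previous paragraph this gives $T_D(z,0)\equiv T_D(0,0)$, so $D$ is a representative domain and the proof is completed by Proposition~\ref{pro2}.

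The hard part is therefore the implication $\mathscr L_n\Rightarrow(\mathrm P)$, which I would prove by induction on $n$. For $n=2$ property $(\mathrm P)$ is automatic for $m_1$, while for $m_2$ a nontrivial representation would force $m_1\mid m_2$ (here $m_1\geq 2$ makes the condition meaningful), which is excluded by $\mathscr L_2$. For the inductive step, since $m_n>m_{n-1}$ the variable $z_n$ cannot contribute to weighted degree $m_i$ for $i<n$, so $(\mathrm P)$ for those $i$ follows from $(m_1,\ldots,m_{n-1})\in\mathscr L_{n-1}$. For $i=n$ one notes that, $m_n$ being largest, any nontrivial representation must use only $m_1,\ldots,m_{n-1}$ and in fact involves at least two generators; splitting off one of them rewrites it as $m_n=r^{\,n-1}_{k,m,i}$ with $i\leq n-1$ and $k\neq 0$. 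Since condition (ii) places $m_n$ in the range $(M_n-1)\sum_{q<n}m_q<m_n<M_n\sum_{q<n}m_q$, such a representation would exhibit $m_n\in I^{\,n-1}_{m,M_n}$, contradicting $m_n\notin I^{\,n-1}_{m,M_n}$. The main obstacle I anticipate is precisely this bookkeeping: translating ``$m_n$ lies in the numerical semigroup generated by $m_1,\ldots,m_{n-1}$'' into membership in the finite sets $I^{\,n-1}_{m,M_n}$, and keeping the range constraints consistent across the induction.
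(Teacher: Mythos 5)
Your proposal is correct, and its skeleton coincides with the paper's: reduce via Propositions \ref{pro1} and \ref{pro2} to showing $T_D(z,0)\equiv T_D(0,0)$, and then use the arithmetic of $\mathscr L_n$ to kill the problematic entries. The genuine difference is in how the structure of $T_D(z,0)$ is extracted. The paper quotes the transformation rule of $T_D$ under the rotations $f_{m,\theta}$ and analyzes, entry by entry, which Taylor coefficients of $K_{\overline{i}j}(z,0)$ survive the resulting phase factors; the needed non-vanishing of the exponents $c^{(i,j)}_{k,m}=(m_i-m_j)+\sum_r m_rk_r$ is reduced to $r^{j-1}_{k,m,i}\neq m_j$ and excluded by $m_j\notin I^{j-1}_{m,M_j}$. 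You instead bigrade the kernel itself ($c_{\alpha\beta}=0$ unless $\langle\alpha\rangle=\langle\beta\rangle$) and derive the closed formula $K_{\overline{i}j}(z,0)=\partial_{z_j}g_i(z)/K_D(0,0)$ with $g_i$ weighted-homogeneous of degree $m_i$, so that constancy of $T_D(z,0)$ becomes your property $(\mathrm P)$ that $e_i$ is the unique multi-index of weighted degree $m_i$. This $(\mathrm P)$ is exactly equivalent to the paper's condition $c^{(i,j)}_{k,m}\neq 0$ (a violation of either produces a violation of the other by adding or removing $e_i$), and your inductive proof of $\mathscr L_n\Rightarrow(\mathrm P)$ --- forcing $\alpha_n=0$, splitting off one generator to write $m_n=r^{n-1}_{k,m,i}$ with $k\neq 0$, and contradicting $m_n\notin I^{n-1}_{m,M_n}$ --- is the same arithmetic as the paper's, organized along the recursive definition of $\mathscr L_n$. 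What your route buys: it needs only the transformation formula for $K_D$ (already used in Proposition \ref{pro1}) rather than the one for $T_D$, and it makes transparent that for weights in $\mathscr L_n$ each $g_i$ is proportional to $z_i$, so $T_D(z,0)$ is in fact a constant diagonal matrix. What the paper's route makes explicit is that the diagonal and strictly upper-triangular entries are constant for \emph{every} bounded quasi-circular domain, so the hypothesis on the weight is visibly consumed only by the lower-triangular entries; your degree argument shows the same thing, but it is worth stating. One wording repair: ``involves at least two generators'' should be ``has total degree at least two,'' since a nontrivial representation may use a single generator with multiplicity (e.g.\ $m_n=m_q\alpha_q$, $\alpha_q\geq 2$); your splitting argument handles that case verbatim, so this is a slip of phrasing, not a gap.
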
 
\begin{proof}
By Propositions \ref{pro1} and \ref{pro2}, it is enough to show that $T_D(z,0)\equiv T_D(0,0)$.
To this end, let us first recall the transformation formula of $T_D$, which is analogous to that of the Bergman kernel $K_D$:
\[
T_{D_1}(z,w)=\mathrm{Jac}(g,z) T_{D_2}(g(z), g(w)) \overline{{}^t \mathrm{Jac}(g,w)},
\]
where $g: D_1 \rightarrow D_2$ is a biholomorphism between two bounded domains.
Applying this formula to $D_1=D_2=D$ and $g=f_{m,\theta}$, we obtain
\begin{multline*}
\left( K_{\overline{i}j} (z,0) \right)_{1\leq i,j \leq n}
\\=
\mathrm{diag}(e^{i m_1 \theta}, \ldots, e^{i m_n \theta})
\left( K_{\overline{i}j} (f_{m,\theta}(z),0) \right)_{1\leq i,j \leq n}
\mathrm{diag}(e^{-i m_1 \theta}, \ldots, e^{-i m_n \theta}).
\end{multline*}
Comparing each entry, we have
\begin{align}
&K_{\overline{i}i} (z,0)=K_{\overline{i}i} (f_{m,\theta}(z),0), \quad \mbox{for $1\leq i \leq n $},\label{1}\\
&K_{\overline{i}j} (z,0)=e^{i(m_j-m_i) \theta} K_{\overline{i}j} (f_{m,\theta}(z),0),\quad \mbox{for $1\leq i < j \leq n $},\label{2}\\
&K_{\overline{j}i} (z,0)=e^{i(m_i-m_j) \theta} K_{\overline{j}i} (f_{m,\theta}(z),0),\quad \mbox{for $1\leq i < j \leq n $}\label{3}.
\end{align}
Let us check that these relations imply our conclusion.\par
Using a similar argument of Proposition \ref{pro1}, we see that
$K_{\overline{i}i} (z, 0) \equiv K_{\overline{i}i}(0,0) $ 
for any $1\leq i \leq n $.\par
Consider the relation \eqref{2}. Putting $K_{\overline{i}j} (z,0)=\sum a_k z^k$, we see that \eqref{2} is equivalent to
\[
a_k = e^{i \{(m_j-m_i)+ \sum_{r=1}^n m_r k_r  \} \theta} a_k.
\]
The origin is the unique fixed point of the rotation mapping $r_\theta: z \mapsto e^{i \theta } z$ for $\theta \neq 0$.
By definition, we see that $m_j-m_i >0$ and thus $m_j-m_i+ \sum_{r=1}^n m_r k_r >0$ for any $1\leq i < j \leq n $. These facts tell us that
$K_{\overline{i}j} (z, 0) \equiv K_{\overline{i}j}(0,0) $ for any $1\leq i < j \leq n $.
\par
Let us consider the remaining case \eqref{3}. Using again the Taylor expansion, we put $K_{\overline{j}i} (z,0)=\sum a_k z^k$.
Then the relation \eqref{3} implies that
\[
a_k = e^{i \{(m_i-m_j)+ \sum_{r=1}^n m_r k_r  \} \theta} a_k.
\]
Moreover it is easy to see that the exponent $c_{k,m}^{(i,j)}=(m_i-m_j)+ \sum_{r=1}^n m_r k_r$ is a non-zero constant for any $k_j \geq 1, k_\ell \geq 0, 1 \leq \ell \leq n, \ell \neq j$.\par
Putting $k_2=0$, we see that the condition $c_{k,m}^{(1,2)}=0$ is equivalent to 
\begin{align}\label{cond}
 m_1(k_1+1) + m_3 k_3 + \cdots + m_n k_n =m_2.
\end{align}
The assumption $m_1 < m_2 <\cdots < m_n$ gives us $k_3=\cdots=k_n=0$. Thus \eqref{cond} is reduced to
\[
m_1(k_1+1)= m_2.
\]
By the condition $(m_1,m_2) \in \mathscr {L}_{2}$ we know that $m_1(k_1+1)\neq m_2$ for any $k_1 \geq 0$.
Applying a similar argument to $c_{k,m}^{(i,j)}$, we find that $c_{k,m}^{(i,j)}=0$ is equivalent to
\[
m_i + \sum_{r=1}^{j-1} m_r k_r = m_j.
\]
In other words, it is equivalent to $r_{k,m,i}^{j-1}=m_j.$
Since $m_j$ satisfies inequalities:
$$M_j\sum_{q=1}^{j-1} m_q > m_j > (M_j-1)\sum_{q=1}^{j-1} m_q,$$
we know that $m_j \in I_{m,M_j}^{j-1}$ if $c_{k,m}^{(i,j)}=0$.
On the other hand, by the condition $(m_1,\ldots, m_j) \in \mathscr L_{j}$, we have $m_j \not \in I_{m,M_j}^{j-1}$.
Thus we obtain $c_{k,m}^{(i,j)}\neq0$ for any $1 \leq i< j \leq n$.
Hence we conclude that $T_D(z,0)$ is a constant matrix.
\end{proof}
\begin{remark}
In the proof of the theorem,
The assumption of $\mathscr L_n$ is used only for the case \eqref{3}.
In other words, the upper triangular entries of $T_D (z,0)$ are constants for any bounded quasi-circular domains\par
We also remark that the exponent $m_j-m_i+ \sum_{r=1}^n m_r k_r >0$ is a non-zero integer when $k_1= \cdots = k_n=0$.
Thus it follows that $K_{\overline{i}j} (z,0) \equiv K_{\overline{i}j} (0,0)\equiv 0$ for $1\leq i < j \leq n $.
\end{remark}
It is known that if two bounded quasi-circular domains $D$ and $D'$ are biholomorphic then there is a biholomorphic mapping $f$ such that $f(0)=0$ (see the proof of Folgerung 1 in Kaup's paper \cite{Kaup}).
This fact and our main result give us the following corollary:
\begin{corollary}\label{cor1}
Let $D, D' \subset \mathbb C^n$ be bounded quasi-circular domains whose weights are in $\mathscr L_n$.
Then they are biholomorphic if and only if they are linearly equivalent.
\end{corollary}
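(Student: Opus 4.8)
The plan is to obtain the corollary as an immediate consequence of the main theorem proved just above, combined with Kaup's reduction result, so that essentially no new computation is required. I would split the biconditional into its two implications and treat them separately. The implication that linear equivalence forces biholomorphic equivalence is trivial: any invertible linear map carrying $D$ onto $D'$ is in particular a biholomorphism between them. I would dispose of this direction in a single sentence.

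For the converse, suppose $D$ and $D'$ are biholomorphic. The key input is Kaup's result (the proof of Folgerung~1 in \cite{Kaup}), already recorded in the paragraph preceding the corollary: whenever two bounded quasi-circular domains are biholomorphic, one may choose a biholomorphism $f \colon D \to D'$ with $f(0)=0$. Granting this, the weights of $D$ and $D'$ lie in $\mathscr L_n$ by hypothesis, so the main theorem applies to this particular $f$ and yields that $f$ is linear. Hence $D$ and $D'$ are linearly equivalent, which settles the nontrivial direction and completes the proof.

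The argument is therefore structurally short, and I do not anticipate a genuine obstacle in the sense of a delicate estimate; the entire analytic weight of the statement has already been absorbed into the main theorem and into Proposition~\ref{pro1} and Proposition~\ref{pro2}. The one point that deserves care is the invocation of Kaup's reduction: one should confirm that his production of an origin-fixing biholomorphism relies only on the quasi-circular structure together with boundedness, and does not tacitly require the two domains to share the same weight or to coincide. Once this is checked, chaining Kaup's reduction with the linearity theorem closes the argument cleanly, and the \emph{if and only if} is established.
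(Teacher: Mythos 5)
Your proposal is correct and follows exactly the paper's own route: the trivial direction is immediate, and the nontrivial one combines Kaup's origin-fixing reduction (Folgerung 1 in \cite{Kaup}) with the main linearity theorem, which in the paper is indeed stated for a biholomorphism between two possibly distinct quasi-circular domains with weights in $\mathscr L_n$, so no extra hypothesis about equal weights is needed. The point of care you flag is thus resolved by how the theorem is formulated, and nothing further is required.
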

This corollary states that the following theorem due to Braun, Kaup and Upmeier remains true for our cases (cf. \cite{BKU}):
\begin{theorem}\label{BKUthm}
Let $D, D' \subset \mathbb C^n$ be circular domains.
Then they are biholomorphic if and only if they are linearly equivalent.
\end{theorem}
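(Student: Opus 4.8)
The plan is to obtain Theorem \ref{BKUthm} as the circular (i.e.\ weight $(1,\ldots,1)$) specialization of the mechanism behind Corollary \ref{cor1}, reading the ``only if'' direction through the Bergman representative theory rather than through Jordan-algebraic methods. The ``if'' direction is immediate: a linear isomorphism $A$ with $A(D)=D'$ is in particular a biholomorphism. For the converse, I would first record that every bounded circular domain is a minimal representative domain with center at the origin (as recorded in the excerpt, via Proposition \ref{pro1} for minimality and the circular case of the representative-domain identity). Thus both $D$ and $D'$ carry a distinguished center, namely $0$, and Proposition \ref{pro2} becomes available: any biholomorphism sending $0_D$ to $0_{D'}$ is automatically linear.

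Consequently the entire problem reduces to producing, from an arbitrary biholomorphism $f\colon D\to D'$, one that fixes the origin. This reduction is the crux, because a general $f$ need not satisfy $f(0)=0$ (consider automorphisms of the ball moving the center). Conceptually, $f$ conjugates the rotation torus $T=\{z\mapsto e^{i\theta}z\}\subset\mathrm{Aut}(D)$, whose unique common fixed point in $D$ is the origin, onto a compact one-parameter subgroup $fTf^{-1}\subset\mathrm{Aut}(D')$ fixing $f(0)$. Using that $\mathrm{Aut}(D')$ is a real Lie group acting on the bounded domain $D'$, that compact subgroups of such automorphism groups possess fixed points, and that the canonical rotation torus $T'$ of $D'$ has $0$ as its unique common fixed point, one conjugates $fTf^{-1}$ onto $T'$ by some $g\in\mathrm{Aut}(D')$ and checks that $g\circ f$ fixes $0_{D'}$. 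This is exactly the circular instance of the statement recalled in the excerpt (Kaup's Folgerung~1): biholomorphic bounded quasi-circular domains always admit an origin-preserving biholomorphism, so I would simply invoke it here.

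With an origin-preserving biholomorphism $\tilde f\colon D\to D'$ in hand, it maps the center of $D$ to that of $D'$, and Proposition \ref{pro2} immediately yields that $\tilde f$ is linear; hence $D$ and $D'$ are linearly equivalent. Alternatively, and without the representative-domain machinery, one can run the classical equivariance argument: setting $\psi_\theta=\tilde f^{-1}\circ\rho'_\theta\circ\tilde f\circ\rho_{-\theta}$ gives an origin-fixing automorphism of $D$ with identity Jacobian at $0$, so the Cartan Uniqueness Theorem forces $\psi_\theta=\mathrm{id}$, i.e.\ $\tilde f(e^{i\theta}z)=e^{i\theta}\tilde f(z)$, and comparing homogeneous Taylor components kills every term except the linear one. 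I expect the genuine obstacle to be the fixed-point reduction of the second paragraph rather than the final linearization: establishing rigorously that the transported circle action can be conjugated back to the standard rotation torus of $D'$ requires the conjugacy and fixed-point theory for compact groups acting on bounded domains, which is precisely the technical heart hidden inside Kaup's result.
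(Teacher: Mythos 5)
Your proof is correct, but note that the paper itself does not prove Theorem \ref{BKUthm} at all: it quotes it as a known result of Braun--Kaup--Upmeier \cite{BKU}, whose original argument is Jordan-theoretic and valid even for circular domains in complex Banach spaces. What you have done is reassemble, for the weight $(1,\ldots,1)$, exactly the mechanism the paper uses to prove its quasi-circular analogue, Corollary \ref{cor1}: invoke Kaup's Folgerung~1 to replace an arbitrary biholomorphism $f\colon D\to D'$ by an origin-preserving one, then conclude linearity --- in your case directly from Proposition \ref{pro2} together with the fact, recorded in the paper right after that proposition, that every bounded circular domain is a minimal representative domain with center $0$, rather than from the main theorem about $\mathscr L_n$ (which indeed could not be used here, since $\mathscr L_n$ requires $m_1\geq 2$ and strictly increasing weights). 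So your route is the paper's own route for Corollary \ref{cor1}, correctly specialized to the circular case; it buys a finite-dimensional, Bergman-kernel-based proof consistent with the paper's framework, at the cost of the generality of \cite{BKU}. One caution about your second paragraph: the Lie-theoretic sketch there is not rigorous as written, since the existence of fixed points for compact subgroups of $\mathrm{Aut}(D')$ does not by itself produce a $g\in\mathrm{Aut}(D')$ conjugating $fTf^{-1}$ onto $T'$; that conjugation needs something like the Cartan--Iwasawa--Malcev conjugacy of maximal compact subgroups, or Kaup's actual argument, and this is precisely the content hidden in Folgerung~1. Your decision to simply cite Kaup's result (as the paper itself does for Corollary \ref{cor1}) is what keeps the argument sound, and your alternative final step via the Cartan Uniqueness Theorem is also valid because both domains are assumed bounded.
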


\subsection{Quasi-circular domains in $\mathbb C^3$}\label{C3}
Let us turn to the case $n=3$.
By using the conditions (a), (b'), we obtain the following criterion for the linearity.
\begin{corollary}\label{cor}
Let $D$ be a bounded quasi-circular domain in $\mathbb C^3$ and $(m_1,m_2, m_3)$ its weight with the conditions
\begin{itemize}
\item[(i)] $3\leq m_1$,
\item[(ii)] $m_2, m_3 \not\equiv 0  (\mbox{mod } m_1)$,
\item[(iii)] $m_1 + m_2 > m_3$.
\end{itemize}
Then every origin-preserving automorphism of $D$ is linear.
\end{corollary}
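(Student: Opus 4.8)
The plan is to deduce this directly from the main theorem by verifying that the stated conditions (i)--(iii) force the weight $(m_1,m_2,m_3)$ to lie in $\mathscr L_3$. Since the main theorem already establishes linearity of origin-preserving automorphisms for any bounded quasi-circular domain whose weight is in $\mathscr L_n$, it suffices to check membership in $\mathscr L_3$ and then invoke that result. Thus the entire corollary reduces to a finite combinatorial verification, and no further analysis of $T_D$ or the Bergman kernel is needed.

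First I would recall the characterization of $\mathscr L_3$ already recorded in the excerpt: the weight $(m_1,m_2,m_3)$ lies in $\mathscr L_3$ precisely when condition (a) holds, namely $m_1 \geq 2$ and $m_2 \not\equiv 0 \pmod{m_1}$, together with condition (b), the existence of an integer $N \geq 1$ with $m_3 \notin I_{m,N}^2$ and $N(m_1+m_2) > m_3 > (N-1)(m_1+m_2)$. The excerpt also notes that when $N=1$ this is equivalent to condition (b'): $m_1+m_2 > m_3$ and $m_3 \not\equiv 0 \pmod{m_1}$. So the strategy is to show that hypotheses (i)--(iii) imply (a) and (b'), and hence (b) with $N=1$.

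The verification proceeds as follows. Hypothesis (i), $m_1 \geq 3$, certainly gives $m_1 \geq 2$, the first half of (a). Hypothesis (ii) asserts in particular $m_2 \not\equiv 0 \pmod{m_1}$, which is the second half of (a); it also gives $m_3 \not\equiv 0 \pmod{m_1}$, the second half of (b'). Hypothesis (iii) is exactly $m_1 + m_2 > m_3$, the first half of (b'). Hence (a) and (b') both hold, so by the equivalence recorded in the excerpt the weight satisfies (b) with $N=1$, and therefore $(m_1,m_2,m_3) \in \mathscr L_3$. Applying the main theorem to $D=D'$ then yields that every origin-preserving automorphism of $D$ is linear.

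I do not expect any serious obstacle here, since the corollary is a specialization rather than a new result; the only point requiring a little care is that hypothesis (i) is stated as $m_1 \geq 3$ rather than merely $m_1 \geq 2$. The strengthened bound $m_1 \geq 3$ is what makes the clean criterion work in practice: as Example~1 illustrates, the weight $(2,m_2,m_3)$ can fail conditions (a),(b'), so one genuinely wants $m_1 \geq 3$ to obtain a criterion that is both simple and nonvacuous. I would emphasize in the write-up that (i)--(iii) are merely convenient \emph{sufficient} conditions guaranteeing $N=1$, so that one avoids having to compute the sets $I_{m,N}^2$ for larger $N$ altogether.
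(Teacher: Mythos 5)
Your proposal is correct and follows essentially the same route as the paper: the paper obtains this corollary precisely by observing that conditions (i)--(iii) are conditions (a) and (b') (the $N=1$ case of the definition of $\mathscr L_3$), so the weight lies in $\mathscr L_3$ and the main theorem applies. Your remark on why $m_1\geq 3$ rather than $m_1\geq 2$ is stated (the $m_1=2$ case being vacuous, as in the paper's example on weights $(2,m_2,m_3)$) matches the paper's discussion as well.
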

If integers $m_2$ and $m_3$ are both prime numbers, then a weight $(m_1,m_2,m_3)$ satisfies (ii) of the above corollary for any $3 \leq m_1 < m_2$.
This simple observation gives us the next corollary.
\begin{corollary}
Let $D$ be a bounded quasi-circular domain in $\mathbb C^3$ and $(m,p_1, p_2)$ its weight where $p_1, p_2 $ are odd prime numbers such that $5 \leq p_1 < p_2$.
Then every origin-preserving automorphism of $D$ is linear for any number $m$ such that $p_2-p_1< m< p_1$.
\end{corollary}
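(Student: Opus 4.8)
The plan is to deduce this statement directly from Corollary \ref{cor} by checking its three hypotheses for the weight $(m_1,m_2,m_3)=(m,p_1,p_2)$. First I would note that this labelling is legitimate in the sense of Section \ref{mainsec}: the hypothesis $m<p_1<p_2$ makes $(m,p_1,p_2)$ a strictly increasing triple, so it is an admissible weight and the roles $m_1=m$, $m_2=p_1$, $m_3=p_2$ are the correct ones.

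The main content is then the verification of conditions (i)--(iii) of Corollary \ref{cor}. For condition (i), I would argue that $m_1=m\geq 3$: since $p_1$ and $p_2$ are distinct odd primes with $5\leq p_1<p_2$, their difference satisfies $p_2-p_1\geq 2$, and combined with the standing assumption $m>p_2-p_1$ this gives $m>2$, hence $m\geq 3$. For condition (ii), I would use primality: because $p_1$ is prime and $1<m<p_1$, the integer $m$ is neither $1$ nor $p_1$ and therefore is not a divisor of $p_1$, so $p_1\not\equiv 0\pmod m$; similarly $m<p_1<p_2$ with $p_2$ prime forces $m\nmid p_2$, so $p_2\not\equiv 0\pmod m$. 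Condition (iii), namely $m_1+m_2=m+p_1>p_2=m_3$, is immediate by rearranging the hypothesis $m>p_2-p_1$.

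Once (i)--(iii) are in hand, Corollary \ref{cor} applies verbatim and yields that every origin-preserving automorphism of $D$ is linear, which is exactly the claim. I do not expect a genuine obstacle in this argument, as it is essentially a translation of the hypotheses on $m$, $p_1$, $p_2$ into the hypotheses of the preceding corollary. The only point that is not completely immediate is condition (i): there one must invoke the elementary fact that two distinct odd primes differ by at least $2$ in order to conclude $m\geq 3$ from $m>p_2-p_1$, and this is the single place where the hypothesis $5\leq p_1$ (rather than, say, allowing $p_1=2$) is genuinely used.
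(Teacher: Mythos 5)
Your proposal is correct and is essentially the paper's own argument: the paper deduces this corollary from Corollary \ref{cor} by exactly the same observation, namely that primality of $p_1,p_2$ together with $m<p_1<p_2$ gives condition (ii), while $p_2-p_1<m<p_1$ gives (iii) and (since distinct odd primes differ by at least $2$) condition (i). One tiny quibble: what is genuinely used to get $m\geq 3$ is that $p_1,p_2$ are \emph{distinct odd} primes; the stronger hypothesis $5\leq p_1$ mainly serves to keep the range $p_2-p_1<m<p_1$ from being empty, but this side remark does not affect the validity of your proof.
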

In particular, if $(p_1,p_2)$ is a twin prime pair, then we have the following:
\begin{corollary}
Let $D$ be a bounded quasi-circular domain in $\mathbb C^3$ and $(m,p_1, p_2)$ its weight where $(p_1, p_2) \neq (3,5)$ is a twin prime pair.
Then every origin-preserving automorphism of $D$ is linear for any number $m$ such that $3\leq  m< p_1$.
\end{corollary}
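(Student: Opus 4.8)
The plan is to obtain this statement as an immediate specialization of the previous corollary, or equivalently by a direct appeal to Corollary~\ref{cor}. First I would set $m_1=m$, $m_2=p_1$, $m_3=p_2$ and record the structural consequences of the hypotheses. Since $(p_1,p_2)$ is a twin prime pair we have $p_2=p_1+2$, and since $(p_1,p_2)\neq(3,5)$ --- the only twin prime pair whose smaller member equals $3$ --- we may assume $p_1\geq 5$. Together with $3\leq m<p_1<p_2$ this already guarantees the weight ordering $m_1<m_2<m_3$ and, because $p_1$ and $p_2$ are distinct primes, the normalization $\mathrm{gcd}(m,p_1,p_2)=1$.

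Next I would verify the three hypotheses of Corollary~\ref{cor} for the weight $(m,p_1,p_2)$. Condition (i), namely $3\leq m_1=m$, is part of the assumption. Condition (iii) reads $m_1+m_2>m_3$, i.e.\ $m+p_1>p_1+2$, which amounts to $m>2$ and therefore holds. For condition (ii) I would use primality: as $p_1$ is prime and $1<m<p_1$, no divisor of $p_1$ lies strictly between $1$ and $p_1$, so $m\nmid p_1$; likewise $1<m<p_2$ forces $m\nmid p_2$. Hence $p_1,p_2\not\equiv 0\ (\mathrm{mod}\ m)$, which is exactly (ii). Invoking Corollary~\ref{cor} then yields that every origin-preserving automorphism of $D$ is linear.

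The same conclusion may also be phrased through the previous corollary: for a twin prime pair one has $p_2-p_1=2$, so the admissible range $p_2-p_1<m<p_1$ appearing there collapses to $2<m<p_1$, that is $3\leq m<p_1$, precisely the range asserted here; and the exclusion $(p_1,p_2)\neq(3,5)$ secures the standing hypothesis $5\leq p_1$ of that corollary. I do not anticipate any genuine obstacle, since the argument is purely one of specialization. The only points requiring care are the bookkeeping of the exclusion $(p_1,p_2)\neq(3,5)$ --- which is what makes $p_1\geq 5$ and keeps the range of $m$ nonempty --- and the passage from the primality of $p_1$ and $p_2$ to the non-divisibility condition (ii).
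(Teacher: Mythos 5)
Your proposal is correct and follows essentially the same route as the paper: the paper states this corollary as an immediate specialization of the preceding one (for a twin prime pair $p_2-p_1=2$, so the range $p_2-p_1<m<p_1$ becomes $3\leq m<p_1$, and the exclusion of $(3,5)$ secures $5\leq p_1$), which itself rests on Corollary \ref{cor}. Your additional direct verification of conditions (i)--(iii) of Corollary \ref{cor} is accurate but just unwinds the same chain of reasoning.
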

If $3\leq m_1< m_2 < m_3 < 2m_1$, then the weight $(m_1,m_2,m_3)$ satisfies the assumptions (i), (ii), (iii) of Corollary \ref{cor}.
Thus we have the following (cf. \cite[pp.264]{Kaup}).
\begin{corollary}
Let $D$ be a bounded quasi-circular domain in $\mathbb C^3$ and $(m_1,m_2,m_3)$ its weight with $3\leq m_1< m_2 < m_3 < 2m_1$.
Then every origin-preserving automorphism of $D$ is linear.
\end{corollary}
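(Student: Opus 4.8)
The plan is to reduce the statement entirely to Corollary \ref{cor}: I would verify that the hypotheses $3 \le m_1 < m_2 < m_3 < 2m_1$ force the weight $(m_1,m_2,m_3)$ to satisfy the three conditions (i), (ii), (iii) appearing there, and then invoke that corollary directly. No new analytic or geometric input is needed; the whole argument is a short arithmetic check, which is why the result is phrased as a corollary rather than a theorem.

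Condition (i) asks only for $3 \le m_1$, which is part of the hypothesis and requires no work. For condition (ii), I would observe that the chain $m_1 < m_2 < 2m_1$ places $m_2$ strictly between the two consecutive multiples $m_1$ and $2m_1$ of $m_1$; since the open interval $(m_1, 2m_1)$ contains no multiple of $m_1$, this gives $m_2 \not\equiv 0 \pmod{m_1}$. The identical reasoning applied to $m_1 < m_3 < 2m_1$ yields $m_3 \not\equiv 0 \pmod{m_1}$, so condition (ii) holds. Here it is essential that all the inequalities are strict, since it is precisely strictness that keeps $m_2$ and $m_3$ away from the endpoints $m_1$ and $2m_1$.

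Finally, for condition (iii) I would combine the bounds $m_1 < m_2$ and $m_3 < 2m_1$ to get
\[
m_1 + m_2 > m_1 + m_1 = 2m_1 > m_3,
\]
which is exactly the inequality $m_1 + m_2 > m_3$ demanded by (iii). With (i), (ii), (iii) all verified, Corollary \ref{cor} applies verbatim and gives the linearity of every origin-preserving automorphism of $D$. I do not expect any genuine obstacle in this proof; the only point deserving care is the strictness of the chain of inequalities (guaranteeing both the modular conditions and the spacing bound), and all the real content has already been absorbed into Corollary \ref{cor}.
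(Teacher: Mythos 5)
Your proposal is correct and follows exactly the paper's route: the paper likewise deduces this corollary by noting that $3\leq m_1< m_2 < m_3 < 2m_1$ forces conditions (i), (ii), (iii) of Corollary \ref{cor}, merely stating this implication where you spell out the (easy) arithmetic. Your verification of each condition is sound, so nothing is missing.
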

The next theorem tells us how many integers in the interval $(m_1+m_2, 2 (m_1+m_2) )$ satisfy $(m_1,m_2, m_3) \in \mathscr L_3$ for given numbers $m_1,m_2$.
In the following, we denote, as usual, by $\lfloor \mbox{ } \rfloor$ and $\lceil \mbox{ } \rceil$ the floor function and the ceil function respectively.
\begin{theorem}
Let $m_1,m_2$ be prime numbers such that $5 \leq m_1 < m_2 $. Then we have
\begin{itemize}
\item[(i)]
if $2m_1 <  m_2$, then
there exists $d=m_1+m_2-5-\lfloor \frac{2m_2}{m_1} \rfloor$ numbers $\{a_i\}_{i=1}^d$ in the interval $(m_1+m_2, 2 (m_1+m_2) )$ 
such that $(m_1,m_2, a_i)\in \mathscr L_3$
for any $1 \leq i \leq d$,
\item[(ii)]
if $2m_1> m_2$, then
there exists $d'=m_1+m_2-6-\lfloor \frac{2m_2}{m_1} \rfloor$ numbers $\{b_i\}_{i=1}^{d'}$ 
in the interval $(m_1+m_2, 2 (m_1+m_2)  )$
such that $(m_1,m_2, b_i)\in \mathscr L_3$
for any $1 \leq i \leq d'$.
\end{itemize}
\end{theorem}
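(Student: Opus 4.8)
The plan is to turn membership in $\mathscr L_3$ into a counting problem for nonnegative integer combinations of $m_1$ and $m_2$. First I would note that for any $m_3$ in the open interval $(m_1+m_2,2(m_1+m_2))$ the only value of $N$ compatible with $N(m_1+m_2)>m_3>(N-1)(m_1+m_2)$ is $N=2$, so condition (b) reduces to $m_3\notin I^2_{m,2}$. Since $m_1\ge 5$ and $m_1,m_2$ are distinct primes, we have $m_2\not\equiv 0\ (\mathrm{mod}\ m_1)$, hence $(m_1,m_2)\in\mathscr L_2$ and conditions (a) and (i) hold automatically. Next, because $r^2_{k,m,i}=m_i+m_1k_1+m_2k_2$ and the only representations excluded by $(k_1,k_2)\neq 0$ produce the values $m_1$ and $m_2$, which lie below the interval, I would show
\[
I^2_{m,2}=\{\,n\in\mathbb Z:\ m_1+m_2<n<2(m_1+m_2),\ n=am_1+bm_2\ \text{for some } a,b\in\mathbb Z_{\ge 0}\,\}.
\]
Writing $J$ for the set of integers in the open interval, so that $\#J=m_1+m_2-1$, the quantity sought is $d=\#J-R$, where $R:=\#I^2_{m,2}$ counts the representable integers in $J$.

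To evaluate $R$, I would first record that each representable $n\in J$ has a \emph{unique} nonnegative representation: two distinct ones differ by $(m_2,-m_1)$ and force $n\ge m_1m_2$, whereas $m_1m_2-2(m_1+m_2)=(m_1-2)(m_2-2)-4\ge 3\cdot 5-4>0$ shows every element of $J$ lies below $m_1m_2$. Uniqueness lets me count $R=\sum_b N_b$ by partitioning according to $b$, where $N_b$ is the number of $a\ge 0$ with $am_1+bm_2\in J$. From $bm_2<2(m_1+m_2)$ and $m_1<m_2$ only $b\in\{0,1,2,3\}$ occur, and for fixed $b$ the condition $m_1+m_2<am_1+bm_2<2(m_1+m_2)$ reads
\[
1+\frac{(1-b)m_2}{m_1}<a<2+\frac{(2-b)m_2}{m_1}.
\]

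The main step is to evaluate these counts exactly, and the key arithmetic fact I would use is that $m_1$ is an odd prime coprime to $m_2$, so neither $m_2/m_1$ nor $2m_2/m_1$ is an integer and all the endpoints above are non-integral. A direct reading of the inequalities then gives
\[
N_0=1+\left\lfloor\tfrac{2m_2}{m_1}\right\rfloor-\left\lfloor\tfrac{m_2}{m_1}\right\rfloor,\qquad N_1=1+\left\lfloor\tfrac{m_2}{m_1}\right\rfloor,\qquad N_2=2,
\]
so that $N_0+N_1+N_2=4+\lfloor 2m_2/m_1\rfloor$, the two $\lfloor m_2/m_1\rfloor$ terms cancelling regardless of the case. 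The only case-dependence sits in $N_3$: the interval for $a$ is $(1-2m_2/m_1,\,2-m_2/m_1)$, whose upper endpoint $2-m_2/m_1$ is positive exactly when $m_2<2m_1$, so $N_3=1$ in case (ii) and $N_3=0$ in case (i). Hence $R=4+\lfloor 2m_2/m_1\rfloor+N_3$, and substituting into $d=\#J-R=(m_1+m_2-1)-R$ yields $m_1+m_2-5-\lfloor 2m_2/m_1\rfloor$ in case (i) and $m_1+m_2-6-\lfloor 2m_2/m_1\rfloor$ in case (ii). I expect the floor bookkeeping for $N_0,N_1$ and the sign analysis giving $N_3$ to be the only delicate points; the remaining reductions are forced.
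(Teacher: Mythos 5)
Your proof is correct, and its computational core coincides with the paper's: the paper decomposes $I^2_{m,2}$ into the sets $S_1=\{rm_1\}$, $S_2=\{rm_1+m_2\}$, $S_3=\{m_1+2m_2\}$, $S_4=\{2m_2,3m_2\}\cap J$, which is exactly your partition by the coefficient $b\in\{0,1,2,3\}$ of $m_2$ (one has $\#S_1=N_0$, $\#S_2=N_1$, $\#S_3+\#S_4=N_2+N_3$), and the floor-function bookkeeping, including the case split governed by whether $3m_2$ lands in the interval, is the same. The genuine difference is how over-counting is excluded. The paper asserts $S_i\cap S_j=\emptyset$ pairwise and verifies only the delicate case $S_1\cap S_4$, invoking primality of both $m_1,m_2$ and the bound $m_1\geq 5$ in an ad hoc way; you instead prove a single uniqueness-of-representation lemma: two distinct nonnegative representations of $n$ as $am_1+bm_2$ force $n\geq m_1m_2$, while every $n\in J$ satisfies $n<2(m_1+m_2)<m_1m_2$ because $(m_1-2)(m_2-2)>4$. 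This is cleaner, disposes of all four classes at once, and isolates exactly where $m_1\geq 5$ enters --- indeed it shows the only failing prime pair is $(3,5)$, which is precisely the $S_1\cap S_4\neq\emptyset$ phenomenon (the paper's $(3,5,m_3)$ example) that forces the paper to treat $m_1=3$ in a separate theorem. One small blemish: your parenthetical claim that ``all the endpoints above are non-integral'' is false for $b=1$ (lower endpoint equal to $1$) and $b=2$ (upper endpoint equal to $2$); since these are strict inequalities with integer endpoints, the stated counts $N_1=1+\lfloor m_2/m_1\rfloor$ and $N_2=2$ remain correct, so nothing breaks, but that justification should be reworded.
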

\begin{proof}
We first observe that
\[
\{r_{k,m,1}^2: k_1, k_2, \geq 0\}= \bigcup_{k_2=0}^\infty \{rm_1 +k_2m_2 : r \geq 1\}.
\]
It implies that $0 \leq k_2 \leq 2$ if $r_{k,m,1}^2 \in I_{m,2}^2$. In the case of $k_2=2$, except the element $m_1 +2m_2$,
we have $rm_1 +2m_2 \geq 2(m_1+m_2)$.
Therefore $r_{k,m,1}^2$ is an element of $I_{m,2}^2$ if and only if $r_{k,m,1}^2 \in S_1 \cup S_2 \cup S_3$,
where $S_1, S_2, S_3$ are given by
\begin{align*}
S_1&=\{rm_1 : r \geq 2, 2(m_1+m_2) > rm_1 > m_1+m_2\},\\
S_2&=\{rm_1 +m_2: r \geq 2, 2(m_1+m_2) > rm_1 +m_2 > m_1+m_2\},\\
S_3&=\{m_1+2m_2\}.
\end{align*}
By a similar argument, we can show that $r_{k,m,2}^2$ is an element of $I_{m,2}^2$ if and only if $r_{k,m,2}^2 \in S_2 \cup S_3 \cup S_4$,
where 
\begin{align*}
S_4&=\{rm_2 : 2\leq r \leq 3, 2(m_1+m_2) > rm_2 > m_1+m_2\}.
\end{align*} 
Since $2(m_1+m_2) > m_1+2m_2$ and $2m_2 > m_1+m_2$ for any positive integers $m_1, m_2$, we have $\# S_3=1$ and $1 \leq \# S_4 \leq 2$.
Now we claim that $S_i \cap S_i = \emptyset$ for any $1 \leq i,j \leq 4$. 
Here we check only $S_1 \cap S_4 = \emptyset$ and omit details of remaining cases.
If $S_1 \cap S_4 \not= \emptyset$, then there exists $r \geq 2$ such that $rm_1= 2m_2$ or  $rm_1= 3m_2$.
Since the right hand side of $rm_1= 2m_2$ is an even number and $m_1$ is an odd prime number, it is equivalent to $r' m_1 =m_2$ for some integer $r'$.
In particular, we see that $r' \neq 1$ by $m_1 < m_2$.
It contradicts that $m_2$ is a prime number. We next consider the case $rm_1= 3m_2$. In this case, using the condition $m_1 \geq 5$, we can conclude that $rm_1\not= 3m_2$ with the same logic.
It implies that $S_1 \cap S_4 = \emptyset$.
Thus we have $I_{m,2}^2 = \bigcup_{i=1}^4 S_i$ and 
$\# I_{m,2}^2 = \sum_{i=1}^4 \# S_i$. Then $\# S_1, \# S_2$ and $\# S_4$ are given by
\begin{align*}
\# S_1 &= \left\lfloor \dfrac{2(m_1+m_2)}{m_1} \right\rfloor - \left\lceil \dfrac{m_1+m_2}{m_1} \right\rceil +1,\\
&=2+ \left\lfloor \dfrac{2m_2}{m_1} \right\rfloor - \left(1+\left\lceil \dfrac{m_2}{m_1} \right\rceil\right)+1,\\
&= 2+\left\lfloor \dfrac{2m_2}{m_1} \right\rfloor -\left\lceil \dfrac{m_2}{m_1} \right\rceil,\\
\# S_2 &= \left\lfloor \dfrac{m_1+m_2}{m_1} \right\rfloor=1+  \left\lfloor \dfrac{m_2}{m_1} \right\rfloor,\\
\# S_4 &=
\begin{cases}
1 & \text{if $2m_1 < m_2$ },\\
2 & \text{if $2m_1 > m_2$} .
\end{cases}
\end{align*}
Moreover we have
\begin{align*}
\# S_1+ \# S_2 + \# S_3&=
4+\left\lfloor \dfrac{2m_2}{m_1} \right\rfloor -\left\lceil \dfrac{m_2}{m_1} \right\rceil+ \left\lfloor \dfrac{m_2}{m_1} \right\rfloor,\\
&= 3+\left\lfloor \dfrac{2m_2}{m_1} \right\rfloor.
\end{align*}
For the second equality we use the following simple fact:
\[
-\left\lceil x \right\rceil + \left\lfloor x \right\rfloor=-1, \quad \mbox{for any $x \not \in \mathbb Z_+$}.
\]
Hence we obtain
\[
\# I_{m,2}^2 =
\begin{cases}
4+ \left\lfloor \dfrac{2m_2}{m_1} \right\rfloor & \text{if $2m_1 < m_2$},\\
5+ \left\lfloor \dfrac{2m_2}{m_1} \right\rfloor & \text{if $2m_1 > m_2$}.
\end{cases}
\]
Let us put
\[
S=\{ n \in \mathbb Z_+ \text{ such that } m_1+m_2 < n < 2(m_1+m_2) \text{ and } n \not \in I_{m,2}^2\}.
\]
Then we have
\[
\# S = 
\begin{cases}
m_1+m_2-5-\left\lfloor \dfrac{2m_2}{m_1} \right\rfloor & \text{if $2m_1 < m_2$},\\
m_1+m_2-6-\left\lfloor \dfrac{2m_2}{m_1} \right\rfloor & \text{if $2m_1 > m_2$}.
\end{cases}
\]
This completes the proof of the theorem.
\end{proof}
\begin{example}
Let us consider $m_1=5$. Then $m_2=7$ is the unique prime number such that $2m_1 >m_2$.
In this case, $d'=12-6-\lfloor\frac{14}{5}\rfloor=4$. Indeed, the set $S$ is given by $S=\{13, 16, 18, 23\}$.
For the case $2m_1 < m_2$,  we give a list of $d$ and $S$ for the first few prime numbers.
\begin{table}[H]
\begin{center}
\begin{tabular}{|c|c|c|}
\hline
$m_2$ & $d$ & $S$\\
\hline
$11$ & 7 & $\{17, 18, 19, 23, 24, 28, 29\}$ \\
$13$ & 8& $\{19, 21, 22, 24, 27, 29, 32, 34\}$ \\
$17$ & 11 & $\{23, 24, 26, 28, 29, 31, 33, 36, 38, 41, 43\}$\\
$19$ & 12 & $\{26, 27, 28, 31, 32, 33, 36, 37, 41, 42, 46, 47\} $\\
$23$ & 14 & $\{29, 31, 32, 34, 36, 37, 39, 41, 42, 44, 47, 49, 52, 54\}$\\
\hline
\end{tabular}
\caption{Some cases of $d$ and $S$ for $m_1=5$}
\end{center}
\end{table}
\end{example}
In the proof of $S_i \cap S_j= \emptyset$ for $1 \leq i,j \leq 4$, the condition ``$5 \leq m_1$" is only used for $S_1 \cap S_4 = \emptyset$.
As we can see in Example \ref{ex35}, $S_1 \cap S_4 = \emptyset$ is not true in general for $m_1=3$.
Thus we need to deal with this special case separately.\par
Now, suppose that $m_1=3$ and $3m_2 \in S_4$. Putting $r=m_2$, we easily see that $rm_1= 3m_2$ and thus $S_1 \cap S_4=\{m_1m_2 \} $.
Therefore we obtain
\begin{align*}
\# S &= m_1+m_2-5-\left\lfloor \dfrac{2m_2}{m_1} \right\rfloor,\\
&= m_2 -2- \left\lfloor \dfrac{2m_2}{3} \right\rfloor,
\end{align*}
for any prime number such that $m_2 \geq 5$.
In summary we have the following result for $m_1=3$:
\begin{theorem}
Let $m_2$ be a prime numbers such that $5 \leq m_2 $.
Then there exists $d=m_2 -2- \lfloor \frac{2m_2}{3} \rfloor$ numbers $\{a_i\}_{i=1}^d$ in the interval $(3+m_2, 2 (3+m_2) )$ 
such that $(3,m_2, a_i)\in \mathscr L_3$
for any $1 \leq i \leq d$,
\end{theorem}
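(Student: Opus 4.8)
The plan is to rerun the counting argument of the preceding theorem, isolating the single step whose justification used $m_1 \geq 5$ and repairing it for $m_1 = 3$. First I would note that since $(3,m_2) \in \mathscr L_2$ (because $m_2$ is a prime exceeding $3$, so $m_2 \not\equiv 0 \pmod 3$), reading condition (b) of the definition of $\mathscr L_3$ with $N = 2$ shows that $S = \{m_3 : m_1+m_2 < m_3 < 2(m_1+m_2),\, m_3 \notin I_{m,2}^2\}$ is precisely the set of admissible third weights; hence $d = \# S$ and it suffices to count $S$. As before I would use $\# S = (m_1 + m_2 - 1) - \# I_{m,2}^2$, the open interval $(m_1+m_2, 2(m_1+m_2))$ containing exactly $m_1 + m_2 - 1$ integers, and compute $\# I_{m,2}^2$ from the decomposition $I_{m,2}^2 = S_1 \cup S_2 \cup S_3 \cup S_4$ established above. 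The cardinalities $\# S_1, \# S_2, \# S_3, \# S_4$ were obtained by elementary floor/ceiling manipulations together with the primality of $m_2$, and none of these derivations invoked $m_1 \geq 5$; hence they remain valid for $m_1 = 3$ and I would simply quote them.

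The only ingredient that genuinely needed $m_1 \geq 5$ was the disjointness $S_1 \cap S_4 = \emptyset$, so the first substantive step is to decide, for $m_1 = 3$, when this intersection can be nonempty. Every element of $S_1$ has the form $3r$ with $r \geq 2$, while $S_4 \subseteq \{2m_2, 3m_2\}$. Because $m_2$ is a prime exceeding $3$ we have $3 \nmid m_2$, so $3r = 2m_2$ is impossible and $3r = 3m_2$ forces $r = m_2$; moreover $3m_2 \in S_4$ requires $2(m_1+m_2) > 3m_2$, i.e. $m_2 < 6$. For a prime $m_2 \geq 5$ this singles out $m_2 = 5$, in which case $S_1 \cap S_4 = \{3m_2\} = \{m_1 m_2\}$ is a single point, whereas for every $m_2 \geq 7$ the intersection is empty.

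The conclusion then follows by inclusion-exclusion applied to these two regimes. For $m_2 \geq 7$ one has $2m_1 < m_2$, the four sets are pairwise disjoint, so $\# I_{m,2}^2 = 4 + \lfloor 2m_2/m_1 \rfloor$ and therefore $\# S = m_1 + m_2 - 5 - \lfloor 2m_2/m_1 \rfloor = m_2 - 2 - \lfloor 2m_2/3 \rfloor$. For $m_2 = 5$ one instead has $2m_1 > m_2$ with $\# S_4 = 2$, but the lone overlap removes one element from the union, so $\# I_{m,2}^2 = (5 + \lfloor 2m_2/m_1 \rfloor) - 1 = 4 + \lfloor 2m_2/m_1 \rfloor$, reproducing the same count and yielding $\# S = m_2 - 2 - \lfloor 2m_2/3 \rfloor = 0$, consistent with Example \ref{ex35}. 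In both regimes the uniform value $d = m_2 - 2 - \lfloor 2m_2/3 \rfloor$ emerges, and enumerating the elements of $S$ as $\{a_i\}_{i=1}^d$ completes the argument.

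I expect the main obstacle to be not the number theory, which is immediate, but the bookkeeping observation that the solitary overlap at $m_2 = 5$ compensates exactly for the discrepancy between the ``$-5$'' count (valid when $2m_1 < m_2$) and the ``$-6$'' count (valid when $2m_1 > m_2$) of the previous theorem, thereby collapsing the two cases into the single expression claimed for all primes $m_2 \geq 5$. Confirming that this correction is exactly of size one, and cross-checking against the degenerate instance $m_2 = 5$ where $d = 0$, is the crux of the verification.
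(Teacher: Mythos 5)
Your proposal is correct and follows essentially the same route as the paper: reuse the decomposition $I_{m,2}^2 = S_1 \cup S_2 \cup S_3 \cup S_4$ and the cardinality computations from the preceding theorem, observe that only the disjointness $S_1 \cap S_4 = \emptyset$ relied on $m_1 \geq 5$, and correct the count by the single overlap $\{3m_2\} = \{m_1 m_2\}$ when $3m_2 \in S_4$ (i.e.\ when $m_2 = 5$). Your write-up is in fact somewhat more explicit than the paper's, which compresses the two regimes $m_2 = 5$ and $m_2 \geq 7$ into one line; your verification that the overlap of size one exactly reconciles the ``$-6$'' and ``$-5$'' counts is precisely the content the paper leaves implicit.
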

The table below gives the values of $f(m_2)=m_2 -2- \lfloor \frac{2m_2}{3} \rfloor$ for $5 \leq m_2 \leq 50$.
\begin{table}[H]
\begin{center}
\begin{tabular}{|c|c|c|c|c|c|c|c|c|c|c|c|c|c|c|c|}
\hline
$m_2$ & 5& 7 & 11 & 13 & 17 & 19 & 23 & 29 & 31 &37 &41&43&47\\
\hline 
$f(m_2)$ & 0& 1 & 2& 3&4&5& 6 &8 &9& 11&12&13&14\\
\hline
\end{tabular}
\caption{Values of $f(m_2) $}
\end{center}
\end{table}
For the reader's convenience, we also give $S$ for the first few cases.
\begin{itemize}
\item $S=\emptyset$ if $(m_1,m_2)=(3,5)$,
\item $S=\{11\}$ if $(m_1,m_2)=(3,7)$,
\item $S=\{16,19\}$ if $(m_1,m_2)=(3,11)$,
\item $S=\{17,20, 23\}$ if $(m_1,m_2)=(3,13)$,
\item $S=\{22, 25, 28, 31\}$ if $(m_1,m_2)=(3,17)$,
\item $S=\{23, 26, 29, 32, 35\}$ if $(m_1,m_2)=(3,19)$,
\item $S=\{28, 31, 34, 37, 40, 43\}$ if $(m_1,m_2)=(3,23)$,
\item $S=\{34, 37, 40, 43, 46, 49, 52, 55\}$ if $(m_1,m_2)=(3,29)$.
\end{itemize}
\section*{Acknowledgement}
The author would like to thank Kang-Tae Kim and Feng Rong for their interest and comments on this work.
Moreover the author also thanks to Hyeseon Kim and Van Thu Ninh for their comments 
on an earlier draft of the manuscript which improve the exposition of the paper.
\bibliographystyle{amsplain}

\end{document}